\newtheorem{theorem}{Theorem}[section]
\newtheorem{lemma}[theorem]{Lemma}
\newtheorem{prop}[theorem]{Proposition}
\newtheorem{conjecture}[theorem]{Conjecture}
\theoremstyle{definition}
\theoremstyle{remark}
\newtheorem{remark}[theorem]{Remark}
\numberwithin{equation}{section}
\begin{document}

\title{\bf Giroux correspondence, confoliations, and symplectic structures on $S^1\times M$}

\author{Jin Hong Kim}



\maketitle

\begin{abstract}
Let $M$ be a closed oriented 3-manifold such that $S^1\times M$ admits a symplectic structure $\omega$. The goal of this paper is to show that $M$ is a fiber bundle over $S^1$. The basic idea is to use the obvious $S^1$-action on $S^1\times M$ by rotating the first factor of $S^1\times M$, and one of the key steps is to show that the $S^1$-action on $S^1\times M$ is actually symplectic with respect to a symplectic form cohomologous to $\omega$. We achieve it by crucially using the recent result or its relative version of Giroux about one-to-one correspondence between open book decompositions of $M$ up to positive stabilization and co-oriented contact structures on $M$ up to contact isotopy. As a consequence, we can give an answer to a question of Kronheimer concerning the relation between symplectic structures on $S^1\times M_K$ and fibered knots $K$, where $M_K$ denotes the result of $0$-surgery on $S^3$ along a knot $K$ in $S^3$. Moreover, a complete picture of the various intriguing implications between symplectic structures on $S^1\times M_K$ and fibered knots can be provided as in Table I below, and thus we fill in the missing links in the circle of ideas around this topic.
\end{abstract}


\section{Introduction and statements of results} \label{sec1}

It was Thurston in \cite{Th} who first proved that any closed
oriented smooth 4-manifold $X$ which fibers over a Riemann surface
admits a symplectic structure, unless the fiber class is torsion in
$H_2(X, {\bf Z})$. Thus, if the genus of the fiber of such a closed
oriented 4-manifold $X$ is greater than or equal to 2, then the
manifold $X$ always admits a symplectic structure. Moreover, any
fibration of a closed oriented 3-manifold $M$ over a circle $S^1$
induces a symplectic structure on the 4-manifold $S^1\times M$. (See
\cite{Mc-S} and \cite{G-S}.)  Furthermore, it seems to have been widely believed
that the converse also holds (see \cite{Ta}).

\begin{conjecture} \label{conj1.1}
Let $M$ be a closed oriented $3$-manifold such that $S^1\times M$
admits a symplectic structure. Then $M$ fibers over $S^1$.
\end{conjecture}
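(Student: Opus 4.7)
The plan is to produce a closed nowhere-vanishing $1$-form on $M$ and then invoke Tischler's theorem to conclude that $M$ fibers over $S^1$. The cleanest way to extract such a $1$-form from $\omega$ is to work with an $S^1$-invariant symplectic form cohomologous to $\omega$. Indeed, if $\omega'$ is $S^1$-invariant, closed, and nondegenerate, then it decomposes as $\omega' = dt \wedge \alpha + \beta$ with $\alpha \in \Omega^1(M)$ and $\beta \in \Omega^2(M)$ independent of $t$; the closedness of $\omega'$ forces $d\alpha = 0$ and $d\beta = 0$, and the volume condition
\[
\omega' \wedge \omega' \;=\; 2\, dt \wedge \alpha \wedge \beta \;>\; 0
\]
forces $\alpha \wedge \beta$ to be nowhere-zero on $M$, whence $\alpha$ itself is nowhere-vanishing. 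Tischler's theorem then yields a fibration $M \to S^1$.

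The proof is thereby reduced to the assertion flagged in the abstract: the class $[\omega]$ admits an $S^1$-invariant symplectic representative. The naive attempt is the $S^1$-average
\[
\bar\omega \;=\; \int_{S^1} R_t^*\omega \, dt,
\]
where $R_t$ is rotation of the $S^1$-factor by $t$; this is closed, $S^1$-invariant, and cohomologous to $\omega$, but nondegeneracy can be destroyed by cancellations among the slices. My plan to fix this is to first isotope $\omega$ within $[\omega]$ so that its average becomes nondegenerate. On each slice $\{t\} \times M$, the restriction of $\omega$ is a closed $2$-form with $1$-dimensional kernel, hence a confoliation on $M$ in the sense of Eliashberg--Thurston, which perturbs to a contact structure $\xi_t$. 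Gray stability implies that the $\xi_t$ are all isotopic to a fixed contact structure $\xi$, and the Giroux correspondence attaches to $\xi$ an open book decomposition $(B,\pi)$ of $M$. The \emph{relative} form of Giroux's theorem is then used to organize the family of open books $(B_t,\pi_t)$ belonging to $\xi_t$, after suitable positive stabilization, into a single $t$-independent model $(B,\pi)$; this common open book supplies a smooth $t$-family of ambient diffeomorphisms of $M$ that transports the slices of $\omega$ to a standard form. After this modification, the averaged form $\bar\omega$ is nondegenerate and hence provides the desired $S^1$-invariant symplectic representative of $[\omega]$.

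The main obstacle is precisely the coherent $t$-parameter control in the last step: the relative Giroux correspondence must do more than just make the open books $t$-independent, it must do so in a way that carries along enough of the ambient symplectic data on $S^1 \times M$ to guarantee nondegeneracy of the resulting average. This is the substantive point of the argument and the reason the ordinary Giroux theorem is insufficient. Once this is in hand, the elementary computation of the first paragraph closes out Conjecture~\ref{conj1.1}, and the application to $0$-surgery manifolds $M_K$ follows by combining it with the standard identification of fibrations $M_K \to S^1$ with fibered structures on the knot $K \subset S^3$.
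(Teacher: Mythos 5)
Your first paragraph is fine and coincides with what the paper does in Section~\ref{sec4}: once an $S^1$-invariant representative $\omega'=dt\wedge\alpha+\beta$ is in hand, closedness gives $d_M\alpha=0$, the volume condition gives $\alpha$ nowhere vanishing (since $\beta\wedge\beta=0$ on a $3$-manifold), and Tischler's theorem (equivalently McDuff's generalized moment map, which is what the paper cites) produces the fibration. The problem is that everything has been pushed into the claim that $[\omega]$ has an $S^1$-invariant symplectic representative, and your argument for that claim is not a proof but a plan whose decisive step you yourself flag as unresolved (``the main obstacle is precisely the coherent $t$-parameter control in the last step\dots This is the substantive point of the argument''). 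A proposal that defers its substantive point has a genuine gap, and here the gap is exactly where the paper concentrates all of its effort (Proposition~\ref{prop3.1} and Theorem~\ref{thm3.1}).

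Concretely, three things go wrong in your sketch of that step. First, the restriction of $\omega$ to a slice $\{t\}\times M$ is a closed $2$-form whose kernel is a \emph{line} field, not a plane field, so it is not a confoliation in the sense of Eliashberg--Thurston; the relevant plane field is $\xi_t=\ker\beta_t$ with $\beta_t=j_t^*(\iota_X\omega)$, and there is no a priori reason that $\beta_t\wedge d_M\beta_t$ has a fixed sign, which is the hypothesis Theorem~\ref{thm2.3} needs. The paper's entire Proposition~\ref{prop3.1} is devoted to establishing $\beta_t\wedge d_M\beta_t\equiv 0$ before Eliashberg--Thurston is ever invoked, and you have no substitute for it. Second, even granting contact structures $\xi_t$ and, via Gray stability and Giroux, a single open book after positive stabilizations, the Giroux correspondence is a statement up to contact isotopy and stabilization; it carries no quantitative information about $\omega$ itself, so the asserted ``$t$-family of ambient diffeomorphisms that transports the slices of $\omega$ to a standard form'' is not supplied by anything you cite. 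Third, and most importantly, nondegeneracy of the average $\bar\omega=\int_{S^1}R_t^*\omega\,dt$ is exactly the kind of pointwise open condition that averaging destroys (the paper's own remark about $S^2\times S^2$ makes this point); two symplectic forms adapted to the same open book can still cancel on a $2$-plane after averaging, and you give no mechanism ruling this out. By contrast, the paper argues (by a case analysis on $\beta_t\wedge d_M\beta_t$ and an explicit local normal form for $\omega$ near an open book, Lemma~\ref{lem3.3}) that $\mathcal{L}_X\omega$ is forced to vanish, rather than trying to repair the average. Until you can prove that the modified average is nondegenerate, the argument does not close.
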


Indeed, there have been several attempts towards the conjecture, and it
turns out that the conjecture is true for many important classes of
4-manifolds. For instance, see \cite{McC}, \cite{C-M}, \cite{Et},
\cite{F-V}, \cite{F-V2}, and \cite{F-V3}.

The goal of this paper is that by taking a completely different but elementary approach from the previous work, we give a short and affirmative proof to the Conjecture \ref{conj1.1}. To do so, we shall use the $S^1$-action on the symplectic 4-manifold $S^1\times M$ obtained by rotating the first factor of $S^1\times M$. One crucial observation of the proof is that every symplectic class on $S^1\times M$ can always be represented by a symplectic form invariant under the $S^1$-action. We show this important fact by a recent result and its relative version of Giroux about one-to-one correspondence between contact structures up to contact isotopy and open book decompositions up to positive stabilizations (see \cite{Gi}, \cite{HKM}, \cite{EO}, and \cite{Etn} for more details).

To the author's knowledge, it is still unknown whether or not any action of a compact connected Lie group $G$ on a symplectic $2n$-manifold $X$ always induces a $G$-invariant symplectic form on
$X$, in general. On the other hand, in Riemannian geometry one can always obtain
a $G$-invariant Riemannian metric by taking the average of a
Riemannian metric over the Lie group $G$. As $S^2\times S^2$ with
the product symplectic form shows, simply taking the average of a
symplectic form over the Lie group $G$ does not yield a
$G$-invariant symplectic form \cite{Ono}.

Once we show that there exists a symplectic form invariant under an $S^1$-action on $S^1\times M$, it is an
easy and well-known procedure to complete the proof of the conjecture. In Section \ref{sec4}, for the sake of the reader we provide a proof of Theorem \ref{thm4.1}, using an argument of D. Tischler in
\cite{Ti} about fibering certain foliated manifolds over $S^1$. We
remark that Theorem 18 in \cite{F-G-M} gives an alternative argument
of the second half of the proof of our main Theorem \ref{thm4.1}.

As a generalization of the Theorem \ref{thm4.1}, Baldridge asked in
\cite{Ba} whether or not for every closed symplectic 4-manifold
admitting a free $S^1$-action whose orbit space is $M$ the quotient
manifold $M$ fibers over $S^1$. We think that our method of the present paper can be adapted to answer the following Conjecture \ref{conj1.2}. But we do not pursue it in this paper, for the sake of simplicity.

\begin{conjecture} \label{conj1.2}
Let $X$ be a closed symplectic $4$-manifold admitting a free
$S^1$-action whose orbit space is $M$. Then the quotient manifold
$M$ fibers over $S^1$.
\end{conjecture}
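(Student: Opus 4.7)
The strategy runs parallel to the proof of Theorem~\ref{thm4.1}. Since the $S^1$-action on $X$ is free, the orbit projection $\pi\colon X \to M$ is a principal $S^1$-bundle and the generating vector field $V$ is nowhere zero on $X$. I would proceed in three steps: (i) replace $\omega$ by a cohomologous $S^1$-invariant symplectic form $\omega'$; (ii) contract $\omega'$ with $V$ to obtain a nowhere-vanishing closed $1$-form on $M$; (iii) apply the Tischler argument used at the end of Section~\ref{sec4} to fiber $M$ over $S^1$.

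For step (i), fix a connection $1$-form $\theta$ on $X$ with $\theta(V)=1$ and $\mathcal{L}_V\theta = 0$; every $S^1$-invariant form on $X$ then decomposes uniquely into a horizontal part and a $\theta$-wedge part. I would cover $M$ by open sets $U_\alpha$ over which $\pi$ trivializes as $S^1\times U_\alpha$, and on each $\pi^{-1}(U_\alpha)$ invoke the relative Giroux correspondence exactly as in the proof of Theorem~\ref{thm4.1} to replace $\omega|_{\pi^{-1}(U_\alpha)}$ by an $S^1$-invariant symplectic representative in the same relative cohomology class, with prescribed behavior on a collar of the boundary. An equivariant partition of unity subordinate to $\{\pi^{-1}(U_\alpha)\}$ then assembles these local pieces into a global $S^1$-invariant closed $2$-form $\omega'$ cohomologous to $\omega$; openness of the non-degeneracy condition, together with the smallness of the modifications, ensures that $\omega'$ remains symplectic.

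This patching is precisely where the main obstacle lies. In the setting of Theorem~\ref{thm4.1} one has a global $S^1$-coordinate on $S^1\times M$, whereas here the $S^1$-coordinate is only defined up to the non-trivial transition data of the bundle (encoded by its Euler class in $H^2(M;\mathbf{Z})$). Consequently the relative Giroux correspondence must be applied with enough control that the local $S^1$-invariant representatives agree on overlaps up to equivariant exact forms; executing this carefully is the only substantive new work required. The role of the confoliation/open book technology of \cite{Gi},\cite{HKM},\cite{EO},\cite{Etn} is exactly to supply this equivariant rigidity on each local slab.

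Once step (i) is in hand, the remainder is automatic. By Cartan's formula, $d(\iota_V\omega') = \mathcal{L}_V\omega' - \iota_V d\omega' = 0$, and $\iota_V\omega'$ is horizontal since $\iota_V\iota_V\omega' = 0$. If $(\iota_V\omega')_x$ vanished at some $x \in X$, then non-degeneracy of $\omega'$ would force $V_x = 0$, contradicting freeness of the action. Thus $\iota_V\omega'$ is a nowhere-vanishing, $S^1$-invariant, closed $1$-form on $X$, and descends to a nowhere-vanishing closed $1$-form $\alpha$ on $M$. Perturbing $\alpha$ to a nearby closed $1$-form with rational periods and invoking Tischler's theorem \cite{Ti} then produces the desired fibration $M \to S^1$, completing the proof.
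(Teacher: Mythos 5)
First, a point of comparison: the paper does not prove Conjecture \ref{conj1.2}. It is stated as a conjecture, and the author explicitly writes that the method ``can be adapted'' to it but that it is not pursued, so there is no proof in the paper to measure yours against. Your steps (ii) and (iii) are correct and faithfully mirror Section \ref{sec4}: granted an invariant symplectic representative $\omega'$, the form $\iota_V\omega'$ is closed by Cartan's formula, horizontal, and invariant (since $\mathcal{L}_V$ and $\iota_V$ commute here), hence descends to a nowhere-vanishing closed $1$-form on $M$, and Tischler's theorem finishes the argument. The entire content of the conjecture therefore sits in your step (i), and that is precisely where your write-up has a genuine gap rather than a proof.

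Two concrete problems with step (i). First, the patching is wrong as written: if $\omega'_\alpha=\omega+d\sigma_\alpha$ are closed invariant representatives over $\pi^{-1}(U_\alpha)$, then $\sum_\alpha\rho_\alpha\,\omega'_\alpha$ has exterior derivative $\sum_\alpha d\rho_\alpha\wedge d\sigma_\alpha$, which need not vanish; a partition of unity does not glue closed forms to a closed form. The repair $\omega+d\bigl(\sum_\alpha\rho_\alpha\sigma_\alpha\bigr)$ is closed and cohomologous, but it requires invariant primitives $\sigma_\alpha$ that agree up to controlled error on overlaps, and you construct neither the primitives nor any estimate guaranteeing non-degeneracy of the result; ``smallness of the modifications'' is asserted, not established. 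Second, and more fundamentally, the mechanism of Section \ref{sec3} that you propose to invoke chart by chart is not a modification procedure that outputs an invariant representative on a piece with prescribed boundary behavior: Theorem \ref{thm3.1} is a contradiction argument that exploits the global slices $\{t\}\times M$ of the product to define the $1$-forms $\beta_t$ on the closed $3$-manifold $M$ and then runs the Giroux/confoliation dichotomy on those slices. When the bundle $X\to M$ has non-trivial Euler class there are no such global slices, the analogues of $\beta$ exist only over trivializing charts, and the ``relative cohomology class with prescribed collar behavior'' that you want the relative Giroux correspondence to control is never defined, let alone shown to force agreement on overlaps up to equivariant exact forms. You candidly acknowledge that ``executing this carefully is the only substantive new work required''---but that work \emph{is} the theorem, and it is not done here.
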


Let us denote by $M_K$ a 3-manifold $M_K$ obtained by 0-surgery on a knot $K$ in $S^3$.
As an interesting consequence, we can give an answer to the question
in \cite{K} of Kronheimer about symplectic structures on $S^1\times
M_K$. To be more precise, Fintushel and Stern proved in
\cite{F-S} that if $S^1\times M_K$ admits a symplectic structure
then the symmetrized Alexander polynomial $\Delta_K(t)$ is monic. On
the other hand, Kronheimer proved in \cite{K} that if the knot has a
genus $g(K)$ of two or more a necessary condition for $S^1\times
M_K$ to admit a symplectic structure is that its genus $g(K)$ be
equal to the degree of its symmetrized Alexander polynomial. It is a
well-known fact (\cite{B-Z} or \cite{Ro}) that if a knot $K$ is
fibered then its symmetrized Alexander polynomial is monic and its
genus $g(K)$ is equal to the degree of is symmetrized Alexander
polynomial. Since $S^1\times M_K$ is symplectic for fibered knots,
Kronheimer raised a question whether or not $S^1\times M_K$ admits a
symplectic structure for \emph{non-fibered} knots such as the pretzel knot
$P(5, -3, 5)$. The symmetrized Alexander polynomial of the pretzel
knot $P(5, -3, 5)$ is $t-3+t^{-1}$ and thus monic with its degree
equal to the genus $1$ of the knot. Our another main result is to give a
negative answer to the question of Kronheimer as follows:

\begin{theorem} \label{thm1.1}
The product $4$-manifold $S^1\times M_K$ admits a symplectic structure
if and only if the knot $K$ is always fibered.
\end{theorem}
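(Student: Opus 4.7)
The proof is an ``if and only if'', which I would split into the two implications. The easy direction is ``$K$ fibered $\Rightarrow S^1\times M_K$ symplectic''. If $K$ is a fibered knot with Seifert surface fiber $F$, the $0$-framed longitude equals $\partial F$ and bounds a meridional disk in the glued surgery solid torus, so capping $F$ off yields a closed surface $\hat F$ of genus $g(K)$ that is the fiber of a fibration $M_K\to S^1$. Then $S^1\times M_K$ fibers over $T^2$ with fiber $\hat F$, and since $[\hat F]$ is the non-torsion generator of $H_2(M_K;\mathbb{Z})$, Thurston's construction \cite{Th} produces a symplectic form on $S^1\times M_K$.

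The harder direction, ``$S^1\times M_K$ symplectic $\Rightarrow K$ fibered'', is where the main machinery of the paper enters. Assuming $S^1\times M_K$ admits a symplectic structure, Theorem \ref{thm4.1} applied to $M=M_K$ delivers a fibration $\pi:M_K\to S^1$ with fiber a closed oriented surface $\Sigma$. I would then reduce to fiberedness of $K$ as follows. Since $b_1(M_K)=1$, the class $[\Sigma]$ generates $H_2(M_K;\mathbb{Z})=\mathbb{Z}$; letting $V\subset M_K$ denote the surgery solid torus and $c$ its core, $[c]$ generates $H_1(M_K;\mathbb{Z})=\mathbb{Z}$ and $[\Sigma]\cdot[c]=\pm1$. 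The aim is to realize this homological picture geometrically, arranging $c$ to meet every fiber $\Sigma_t$ transversally in exactly one point, so that $\pi|_V$ is equivalent to the standard open book on the solid torus with binding $c$ and meridional disk pages.

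Once that is achieved, $F:=\Sigma\cap(M_K\setminus V)$ is a properly embedded surface in the knot exterior $S^3\setminus\mathring N(K)$ whose boundary is the $0$-framed longitude of $K$, i.e., a Seifert surface for $K$; and $\pi$ restricts to a fibration $S^3\setminus\mathring N(K)\to S^1$ with fiber $F$, exhibiting $K$ as fibered. I expect the main technical obstacle to be this final geometric step: making the fiber of $\pi$ interact with the core of the surgery in the standard way. To execute it, I would use an isotopy of $c$ inside $V$ to a section of $\pi|_V$ (possible because the algebraic intersection number is $\pm1$), followed by a neighborhood argument to straighten $\pi$ near $c$. Alternatively, one can appeal directly to the classical equivalence, rooted in work of Stallings and Gabai, that $M_K$ fibers over $S^1$ if and only if $K\subset S^3$ is fibered; combined with Theorem \ref{thm4.1} this yields the desired implication at once, and joined with the first direction completes the proof.
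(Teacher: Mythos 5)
Your split into two implications is right, and your easy direction (fibered $\Rightarrow$ symplectic, by capping off the Seifert surface to get a fibration $M_K\to S^1$ and invoking Thurston \cite{Th}) is exactly what the paper relegates to Table I. For the hard direction the paper also begins by applying Theorem \ref{thm4.1} to $M_K$, but from there it proceeds quite differently from you: instead of working with the fiber surface and the surgery core, it takes the nowhere-vanishing closed \emph{integral} $1$-form $\iota_X\omega$ produced inside the proof of Theorem \ref{thm4.1}, pulls it back under the inclusion $j\colon S^3\setminus N(K)\hookrightarrow M_K$, and asserts that the resulting form integrates to a fibration $\pi\colon S^3\setminus N(K)\to S^1$ (citing \cite{Mc-Ta}), whence $K$ is fibered. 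Note that this argument quietly assumes the foliation $\ker j^\ast(\iota_X\omega)$ is transverse to the boundary torus $\partial N(K)$ and that its leaves are Seifert surfaces; it is the $1$-form analogue of exactly the ``interaction with the surgery torus'' issue you correctly identify as the crux.

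On your own execution of that crux: the primary route you sketch does not work as stated. Having $[\Sigma]\cdot[c]=\pm1$ is a purely homological condition, and it does not permit you to isotope the \emph{given} core $c$ to a closed curve transverse to the fibration meeting every fiber exactly once; a curve with that property would make the exterior of $c$ fiber over $S^1$ by fibers-with-a-disk-removed, so producing such an isotopy is essentially equivalent to the statement you are trying to prove, not a routine transversality step. Your fallback, however, is sound and complete: Gabai's theorem (from \emph{Foliations and the topology of $3$-manifolds III}) that a knot $K\subset S^3$ is fibered if and only if the $0$-surgered manifold $M_K$ fibers over $S^1$, combined with Theorem \ref{thm4.1}, immediately yields the implication. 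That is a genuinely different route from the paper's pullback-of-the-$1$-form argument --- it trades an explicit but delicate transversality claim for a citation of a deep but standard theorem --- and, as written, it is the more airtight of the two.
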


According to the recent paper \cite{F-V} of S. Friedl and S.
Vidussi, the product of $S^1$ with the $0$-surgery of $S^3$ along
the pretzel knot $P(5, -3, 5)$ does not admit a symplectic
structure, which fits well with our result. We give the proof of Theorem \ref{thm1.1} at the end of Section 4. As a corollary, as the pretzel knot $P(5, -3, 5)$ shows, the statement that the symmetrized Alexander polynomial of a knot $K$ is monic and its genus $g(K)$ is equal to the degree of its symmetrized Alexander polynomial does not
imply that $S^1\times M_K$ admits a symplectic structure. In summary, when we set the statements {\bf (A)}, {\bf (B)}, and {\bf (C)} as follows,
\begin{itemize}
 \item[\bf (A)] $S^1\times M_K$ admits a symplectic structure,
 \item[\bf (B)] The symmetrized Alexander polynomial of a knot $K$ with
 genus $\ge 2$ is monic and its knot genus $g(K)$ is equal to the degree of its
symmetrized Alexander polynomial,
 \item[\bf (C)] The knot $K$ is fibered,
 \end{itemize}
we can establish the following table for the various implications:
\begin{center}
{Table I}
\vskip .3cm
\begin{tabular}{|c|c|c|} \hline

Implication & True/False
& Reason \\
\hline\hline

{\bf (A)} $\rightarrow$ {\bf (B)} & True & Proved by
Kronheimer and Fintushel-Stern \\
\hline

{\bf (B)} $\rightarrow$ {\bf (A)} & False
& e.g., Pretzel knot $P(5,-3, 5)$ \\
\hline

 {\bf (B)} $\rightarrow$ {\bf (C)} & False
& e.g., Pretzel knot $P(5,-3, 5)$ \\
\hline

 {\bf (C)} $\rightarrow$ {\bf (B)}$\dagger$ & True &
 Proposition 8.16 in \cite{B-Z} (Neuwirth)\\
\hline

 {\bf (A)} $\rightarrow$ {\bf (C)} & True
& Theorem \ref{thm1.1} \\
\hline

 {\bf (C)} $\rightarrow$ {\bf (A)} & True
& Proved by Thurston \\
\hline
\end{tabular}
\end{center}
\qquad $\dagger${\small For this direction, we do not need the
restriction on the genus of a knot.}
\bigskip

Finally a few remarks are in order. During the preparation of our paper, two papers related to the Conjecture \ref{conj1.1} have appeared. In their paper \cite{Ku-Ta}, Kutluhan and Taubes studied the Seiberg-Witten Floer homology of $M$, assuming that $S^1\times M$ admits a symplectic form. As a consequence, by combining their results with Theorem 1 of Y. Ni in \cite{Ni}, they gave a different proof that $M$ fibers over $S^1$, in case that $M$ has the first Betti number equal to 1 and the first Chern class of the canonical line bundle is not torsion.
Friedl and Vidussi also posted a preprint \cite{Fr-Vi08} asserting the proof of Conjecture \ref{conj1.1} modulo some technical step regarding the residually finite solvability of $\pi_1(M)$ which allegedly depends on a work under preparation by M. Aschenbrenner and S. Friedl. Among other things, the twisted Alexander polynomials, algebraic group theory, and Stallings' characterization (\cite{St}) for the fibration of a $3$-manifold over a circle play crucial roles in their proof.
\smallskip

We organize this paper as follows. In Section \ref{sec2}, we give some basic facts about open
book decompositions for closed contact 3-manifolds, partial open book decompositions for compact contact $3$-manifolds with convex boundary, and confoliations. Section \ref{sec3} is one of the key sections for this paper. In that section, we show that every symplectic class on $S^1\times M$ can always be represented by a symplectic form invariant under the naturally defined $S^1$-action. Finally Section \ref{sec4} is devoted to the proofs of the main Theorems \ref{thm4.1} and \ref{thm4.2}.

\section{Giroux correspondence and confoliations} \label{sec2}

The aim of this section is to review some basic facts about open
book decompositions for contact 3-manifolds, partial open book decompositions for compact contact $3$-manifolds with convex boundary, and confoliations.

First we briefly review the definition of an open book decomposition
of a closed 3-manifold $M$, and its extension to compact contact 3-manifolds with convex boundary
can be easily obtained with an obvious modification (see the recent papers \cite{HKM} and \cite{EO}). Let $(F, h)$ be a pair consisting of an oriented surface $F$ and a diffeomorphism
$h:F\to F$ which is the identity on $\partial F$, and $K$ be a link
in $M$. An open book decomposition for $M$ with binding $K$ is the
quotient space
\[
((F\times [0,1])/\sim_h, (\partial F\times [0,1])/\sim_h))
\]
which is homeomorphic to $M$. Here the equivalence relation $\sim_h$
is given by
\begin{equation*}
\begin{split}
&(x,1)\sim_h (h(x), 0) \ \text{for}\ x\in F\ \text{and}\\
&(x, t)\sim_h(x, t')\ \text{for}\ x\in \partial F\ \text{and}\ \text{all}\ t, t'\in [0,1].
\end{split}
\end{equation*}
We will call $F\times \{ t \}$ for $t\in [0,1]$ a \emph{page} of the open
book decomposition. Two open book decomposition is \emph{equivalent} if
there is an ambient isotopy between them taking binding to binding
and pages to pages. We can obtain a new open book decomposition $(F,
h')$ from $(F, h)$ by a \emph{positive} (resp. \emph{negative}) \emph{stabilization}.
Namely, $F'$ is obtained from $F$ by attaching a 1-handle $B$ along
$\partial F$ and $h'$ is obtained by extending $h$ by the identity
map on the 1-handle $B$ and taking the composition $R_\gamma\circ h$
(resp. $R_\gamma^{-1}\circ h$) with the right-handed Dehn twist
$R_\gamma$ along a simple closed curve $\gamma$ in $F'$ dual to the
core of the 1-handle $B$.

It is known that every closed 3-manifold has an open book
decomposition, but it is not unique. A contact structure $\tau$ is
said to be \emph{supported} (or \emph{adapted}) by the open book
decomposition $(F, h, K)$ if there is a contact 1-form $\lambda$
satisfying the following properties:
\begin{itemize}
\item $\lambda$ induces a symplectic form $d\lambda$ on each fiber
$F$.

\item $K$ is transverse to $\tau$, and the orientation on $K$ given
by $\lambda$ is the same as the boundary orientation induced from
$F$ coming from the symplectic structure.
\end{itemize}

Thurston and Winkelnkemper showed in \cite{TW75} that any open book
decomposition $(F, h, K)$ supports a contact structure. The contact
planes constructed by them can be made arbitrary close to the
tangent planes of the pages away from the binding. Recently E.
Giroux showed in \cite{Gi} that the converse also holds. To be more
precise, the following theorem holds:

\begin{theorem} \label{thm2.1}
Every contact structures $\tau$ on a closed $3$-manifold $M$ is
supported by some open book decomposition $(F,h,K)$. Moreover, two
open book decompositions $(F, h,K)$ and $(F', h', K')$ which support
the same contact structure $(M,\tau)$ become equivalent after
applying a sequence of positive stabilizations to each.
\end{theorem}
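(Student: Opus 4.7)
The plan is to prove existence and uniqueness-up-to-stabilization separately, following Giroux's original strategy built on contact cell decompositions.

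For existence, I would start with any smooth triangulation of $M$ and perturb it into what Giroux calls a \emph{contact cell decomposition} $\Delta$: a CW decomposition such that every $1$-cell is Legendrian, every $2$-cell is convex with Legendrian boundary whose Thurston--Bennequin invariant (measured inside the $2$-cell) equals $-1$, and the characteristic foliation on each $3$-cell is standard. Such a $\Delta$ always exists because any smooth arc can be $C^0$-approximated by a Legendrian arc, faces can be made convex by a small isotopy, and if necessary one subdivides faces to enforce the $\mathrm{tb} = -1$ condition. Given $\Delta$, the ribbon $F$ of the $1$-skeleton $\Delta^{(1)}$ is a compact surface with boundary whose characteristic foliation is naturally given by the contact planes; a standard argument on convex surfaces then shows that $M \setminus F$ is a disjoint union of standard Darboux $3$-balls, which allows one to realize $M$ as the mapping torus of a diffeomorphism $h \colon F \to F$ with $h|_{\partial F} = \mathrm{id}$. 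The binding $K = \partial F$ together with $(F,h)$ produces the desired open book, and by the Thurston--Winkelnkemper construction one can choose a compatible contact form whose kernel is isotopic to $\tau$.

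For the moreover statement, I would first establish a dictionary: the operation of positive stabilization of an open book corresponds, at the level of contact cell decompositions, to subdividing $\Delta^{(1)}$ by attaching a single new Legendrian arc along $\partial F$ whose contact framing is compatible with the existing dividing set on the ribbon. Conversely, every such elementary contact refinement of $\Delta$ produces precisely a positive stabilization of the associated open book. Given two open books $(F,h,K)$ and $(F',h',K')$ supporting the same contact structure $\tau$, choose associated contact cell decompositions $\Delta$ and $\Delta'$ and then construct a common contact refinement $\Delta''$ of both by successively inserting new Legendrian $1$-cells while preserving convexity of the $2$-cells and the Thurston--Bennequin normalization. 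Passing from $\Delta$ (respectively $\Delta'$) to $\Delta''$ via the dictionary above translates into a finite sequence of positive stabilizations of $(F,h,K)$ (respectively $(F',h',K')$), after which the two open books become equivalent.

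The principal obstacle is the uniqueness half. Producing a common refinement of two contact cell decompositions is delicate because one must simultaneously control the combinatorics of the CW structure and the local contact geometry (convexity of each $2$-cell, the dividing sets on its boundary, and the $\mathrm{tb} = -1$ normalization); the key technical input is Honda's classification of tight contact structures on thickened convex surfaces, which allows one to realize any abstract subdivision of $\Delta^{(1)}$ by a genuinely Legendrian subdivision after a contact isotopy. Equally subtle is verifying that each elementary move between two contact cell decompositions corresponds only to a \emph{positive} stabilization (and never a negative one), which reflects the fact that the added Dehn twist arises from a contact $1$-handle attachment. I would appeal to these technical facts as black boxes and focus the exposition on how they assemble into the correspondence.
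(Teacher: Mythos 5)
The paper does not prove this statement: Theorem \ref{thm2.1} is imported verbatim from Giroux \cite{Gi}, with \cite{Etn}, \cite{HKM}, and \cite{EO} cited as expositions, and is then used as a black box in Section \ref{sec3}. So there is no proof in the paper to compare yours against. What you have written is a sketch of the standard Giroux strategy (contact cell decompositions, the ribbon of the Legendrian $1$-skeleton as page, common refinements for uniqueness), which is indeed the route taken in Giroux's announcement and in Etnyre's lecture notes; in that sense your outline is faithful to how the result is actually proved in the literature.

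That said, as a proof it has genuine gaps, and you should be upfront that it is an outline rather than an argument. In the existence half, the step from ``$M\setminus F$ is a disjoint union of standard Darboux balls'' to ``$M$ is a mapping torus of $h\colon F\to F$'' is not a one-line deduction: what one must actually show is that the complement of a thickened ribbon $F\times[0,1]$ is again diffeomorphic to $F\times[0,1]$ respecting the boundary data, equivalently that $M\setminus K$ fibers over $S^1$ with fiber $F$; this uses the $\mathrm{tb}=-1$ normalization on the $2$-cells and the standardness of the $3$-cells in an essential way, and it is where the compatibility of the resulting open book with $\tau$ (not merely with some contact structure isotopic to it) is verified. More seriously, the entire uniqueness half --- the existence of a common contact refinement of two contact cell decompositions, and the verification that every elementary refinement move changes the open book by a \emph{positive} stabilization only --- is deferred to black boxes. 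That is precisely the hard content of the theorem (and the part whose complete written proof was long unavailable), so your proposal establishes the ``moreover'' clause only modulo the theorem itself. Since the paper under review also treats the whole statement as a citation, this is an acceptable level of rigor for the present context, but it should not be presented as a proof.
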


In our situation, we do not use the full version of this theorem.
Rather we will need the result of Giroux to choose a coordinate
chart on $S^1\times M$ with which we can easily calculate the Lie
derivative of the symplectic form for our purposes (e.g., see Lemma
\ref{lem3.3} for more details). Even if the above Theorem
\ref{thm2.1} is stated for closed contact 3-manifold, the
construction of Giroux shows that the same result holds for
contact 3-manifolds with contact boundary, as the papers \cite{HKM} and \cite{EO} of Honda-Kazez-Mati\' c and Etg\" u-Ozbagci show.

For the sake of reader's convenience, we briefly review the relative Giroux correspondence for compact contact $3$-manifolds with convex boundary, although in the present paper we do not need the full strength of this correspondence. For more details, see \cite{HKM} and \cite{EO}, and most of what is presented here can be found in those two papers.

We first begin with the abstract version of a \emph{partial open book decomposition} which is a triple $(S, P, h)$ satisfying the following three properties:
\begin{itemize}
  \item $S$ is a compact oriented connected surface with non-empty boundary $\partial S$,
  \item $P=P_1\cup P_2\cup\cdots \cup P_r$ where $P_1$, $P_2$, $\ldots$, $P_r$ are $1$-handles is a proper, but not necessarily connected, subsurface of $S$ such that $S$ is obtained from the closure of $S\backslash P$ by attaching 1-handles $P_1$, $P_2$, $\ldots$, $P_r$ successively,
  \item $h: P\to S$ is an embedding such that $h|_{\partial P\cup\partial S}=$identity.
\end{itemize}
Given a partial open book decomposition $(S, P, h)$, we can construct a compact $3$-manifold with boundary as follows. Let $H=(S\times [-1,0])/\sim$, where $(x, t)\sim (x, t')$ for all $x\in \partial S$ and $t, t'\in [-1,0]$,  which is a solid handlebody with $S\times \{ 0 \}\cup -S \times \{-1\}$ as the boundary under the obvious relation $(x, 0)\sim (x, -1)$ for all $x\in \partial S$. We also let $N=(P\times [0,1])/\sim$, where $(x,t)\sim (x,t')$ for all $x\in \partial P\cap \partial S$ and $t,t'\in [0,1]$. Again each component of $N$ is a solid handlebody whose boundary can be described by the connected arcs of the closure of $\partial P\backslash \partial S$. In other words, let $c_1, c_2, \cdots, c_n$ denote such connected arcs. Then each disk $D_i= (c_i\times [0,1])/\sim$ is contained in the boundary of $N$. Thus the boundary of $N$ consists of the union of the disjoint disks $D_i$'s and the surface $P\times \{ 1\}\cup -P\times \{0 \}$  with the relation $(x, 0)\sim (x, 1)$ for all $x\in \partial P\cap \partial S$.

Now let $M=N\cup H$ with the identification of $P\times \{ 0\}\subset \partial N$ (resp. $P\times \{ 1\}\subset \partial N$) with $P\times \{ 0\}\subset \partial H$ (resp. $h(P)\times \{ -1\}\subset \partial H$). Then $M$ is an oriented compact $3$-manifold with oriented boundary
\begin{equation} \label{eq2.1}
\partial M= (S\backslash P)\times \{ 0 \} \cup -(S\backslash h(P))\times \{ -1 \} \cup (\overline{\partial P\backslash \partial S})\times [0,1]
\end{equation}
with the suitable identifications. If a compact $3$-manifold $M$ with boundary is obtained from the abstract partial open book decomposition $(S, P, h)$ as above, then the triple $(S, P, h)$ is called a \emph{partial open book decomposition} of $M$. The notions such as compatibility of a contact structure with respect to a partial open book decomposition, the isomorphism class of two partial open book decompositions, and the definition of a positive stabilization of a partial open book decomposition can also be interpreted suitably for this relative version (e.g., see Definitions 1.10, 1.11, and1.13 in \cite{EO}).

Recall that a closed oriented embedded surface $\Sigma$ in a contact manifold $(M,\xi)$ is called \emph{convex} if there is vector field tansverse to $\Sigma$ which preserves the contact structure $\xi$. A generic surface $\Sigma$ inside a contact manifold can be made convex (cf. \cite{Gi91} and Section 2.2 of \cite{Honda99}). So the assumption that the boundary be convex can be imposed without loss of generality.

In \cite{HKM}, Honda-Kazez-Mati\' c associated the isomorphism classes of compact contact $3$-manifolds with convex boundary to the isomorphism classes of partial open book decompositions modulo positive stabilizations. Conversely, in \cite{EO} Etg\" u and Ozbagci constructed its inverse by describing a compact contact $3$-manifold with convex boundary compatible with a given partial open book decomposition. As in the proof of Proposition 1.9 in \cite{EO}, such a construction is essentially given by the explicit construction of Thurston and Winkelnkemper. Hence the property, as well as others, that for closed contact $3$-manifolds the contact planes constructed by them can be made arbitrary close to the tangent planes of the pages away from the binding can also be used for compact $3$-manifolds with convex boundary.

Now we can state a relative version of Giroux correspondence as follows, which is a relative version of Giroux correspondence for closed contact $3$-manifolds (see Theorem 0.1 in \cite{EO}).

\begin{theorem} \label{thm2.2}
There is a one-to-one correspondence between isomorphism classes of partial open book decompositions modulo positive stabilization and the isomorphism classes of compact contact $3$-manifolds with convex boundary.
\end{theorem}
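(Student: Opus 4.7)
The plan is to establish two mutually inverse maps between the two sets of isomorphism classes, modeled on the closed-case Giroux correspondence but performed carefully relative to the convex boundary.

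For the forward direction (contact manifold to partial open book), given a compact contact manifold $(M,\xi)$ with convex boundary whose dividing set is $\Gamma$, I would follow the Honda-Kazez-Mati\'c strategy. First choose a \emph{contact cell decomposition} of $M$ adapted to $\partial M$: a CW-decomposition whose $1$-skeleton $K$ is Legendrian, whose $2$-cells are convex disks each carrying a single dividing arc, whose $3$-cells are standard tight Darboux balls, and whose restriction to $\partial M$ is a cell decomposition compatible with $\Gamma$. Taking a standard contact neighborhood $H$ of $K$, the page $S$ is the positive region of $\partial H$, and $P \subset S$ is the subsurface corresponding to the parts of $S$ swept by the contact flow back to $S$ through the complement of $H$; the induced first-return diffeomorphism furnishes $h: P \to S$ and hence a partial open book decomposition compatible with $(M,\xi)$.

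For the reverse direction (partial open book to contact manifold), I would simply appeal to the relative Thurston-Winkelnkemper-type construction of Etg\"u-Ozbagci (Proposition 1.9 of \cite{EO}): starting from $(S,P,h)$, assemble $M = N\cup H$ as in (\ref{eq2.1}), put an explicit contact form on the mapping-torus piece $N$ and on the binding handlebody $H$ using a Liouville form on $S$ together with the standard model near the binding, and glue using $h$. The explicit form makes $\partial M$ convex with dividing set matching $\overline{\partial P\backslash \partial S}\times\{1/2\}$, so the resulting structure is of the desired type.

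It remains to check that the two constructions are mutually inverse up to contact isotopy and positive stabilization respectively, and I expect this to be the principal obstacle. The existence of a partial open book for a given contact manifold is fairly direct once a contact cell decomposition is in hand, but showing that any two such partial open books differ by a finite sequence of positive stabilizations requires showing that any two contact cell decompositions admit a common refinement, and that the elementary refinement moves (subdividing a $2$-cell, adjoining a Legendrian arc, isotoping an edge) correspond precisely to positive stabilizations (or trivial modifications) of the associated partial open book. Relative to the closed setting, the extra subtlety is that every refinement and every isotopy must be controlled along $\partial M$ so as to preserve the dividing set $\Gamma$ up to admissible isotopy; this is where one has to invoke the bypass calculus for convex surfaces with Legendrian boundary and the uniqueness of contact germs near convex surfaces (cf.\ \cite{Honda99}). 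Once this is arranged, the rest of the argument is a faithful transcription of Giroux's closed-case correspondence (Theorem \ref{thm2.1}) into the relative setting, with the subsurface $P \subset S$ bookkeeping the convex boundary.
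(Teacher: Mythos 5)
The paper does not actually prove Theorem \ref{thm2.2}: it imports the statement wholesale from Etg\"u--Ozbagci (Theorem 0.1 of \cite{EO}) and Honda--Kazez--Mati\'c \cite{HKM}, so there is no in-paper argument to compare yours against. Measured against the proofs in those references, your outline is faithful to the standard strategy: the forward direction via a contact cell decomposition adapted to the dividing set $\Gamma$ on $\partial M$, with the page $S$ arising as the positive region of the boundary of a standard neighborhood of the Legendrian $1$-skeleton and $h$ as a first-return map, is exactly the Honda--Kazez--Mati\'c construction; the reverse direction via the relative Thurston--Winkelnkemper construction is exactly Proposition 1.9 of \cite{EO}. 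You also correctly flag that the boundary bookkeeping (compatibility of the cell decomposition with $\Gamma$, convexity of $\partial M$ in the glued-up model) is the new feature relative to the closed case.

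The genuine gap is that the half of the theorem carrying its real content --- that any two partial open book decompositions supporting the same compact contact manifold with convex boundary become isomorphic after positive stabilizations --- is only announced as a task, not carried out. Saying that one must ``show any two contact cell decompositions admit a common refinement'' and that ``elementary refinement moves correspond to positive stabilizations'' names the right intermediate claims, but these are precisely the nontrivial lemmas: one must verify that subdividing a $2$-cell and adding a Legendrian arc to the skeleton each change $(S,P,h)$ by a positive stabilization \emph{supported in $P$} (so that the stabilization is admissible in the relative sense), and that isotopies of the skeleton rel $\partial M$ do not disturb the germ of $\xi$ along the convex boundary. Without at least the statements of these lemmas and an indication of why the stabilization arc can always be pushed into the correct region of the page, the uniqueness half remains unproven. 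As it stands your proposal establishes (modulo the cited constructions) that both maps exist, but not that they are mutually inverse on isomorphism classes, which is the assertion of the theorem.
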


In what follows, we will also need to use the result of Eliashberg
and Thurston (Theorem 2.4.1 in \cite{E-T97}). A plane field
$\eta=\ker \theta$ on an oriented 3-manifold is called
\emph{positive} (resp. \emph{negative}) \emph{confoliation} if
$\theta\wedge d\theta\ge 0$ (resp. $\theta\wedge d\theta\le 0$). Let
us denote by $\zeta$ the product foliation of the manifold
$S^2\times S^1$ by the spheres $S^2\times \{ z \}$ for $z\in S^1$.

\begin{theorem}[Eliashberg-Thurston] \label{thm2.3}
Suppose that a $C^2$-confoliation $\eta$ on an oriented 3-manifold
is different from the foliation $\zeta$ on $S^2\times S^1$. Then
$\eta$ can be $C^0$-approximated by contact structure. When $\eta$
is a foliation it can be $C^0$-approximated both by positive and
negative contact structure.
\end{theorem}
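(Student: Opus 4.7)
The overall plan is to localize. Write $\eta=\ker\theta$ and decompose the manifold into the contact locus $U^+=\{p:(\theta\wedge d\theta)(p)>0\}$ and the integrable locus $F=\{p:(\theta\wedge d\theta)(p)=0\}$. On $U^+$ nothing needs to be done; the real task is to perturb $\theta$ in a neighborhood of $F$ so that $\theta\wedge d\theta$ becomes strictly positive there while keeping the modification $C^0$-small and agreeing with the original form outside a small neighborhood. Since the restriction of $\eta$ to (a neighborhood of) $F$ is genuinely integrable, the problem reduces to the following local question: $C^0$-approximate an honest foliation, in a neighborhood of each leaf, by a contact structure.

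The principal tool I would use is the existence of closed transversals. Given a simple closed curve $\gamma$ transverse to the foliated part of $\eta$, choose coordinates $(x,y,t)$ on a solid-torus neighborhood $N(\gamma)$ in which $\gamma=\{x=y=0\}$ is the core and leaves are locally of the form $\{t=\mathrm{const}\}$, so $\theta= f\,dt+O(|x|,|y|)$. Add a perturbation of the form $\epsilon\,\phi(x,y)\,(x\,dy-y\,dx)$, where $\phi$ is a bump function supported in $N(\gamma)$ and equal to $1$ near $\gamma$. A direct computation shows that, for small $\epsilon$, the resulting form is $C^0$-close to $\theta$ and satisfies $\theta\wedge d\theta>0$ on the support of $\phi$. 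This "seed" of contactness is then propagated away from $\gamma$ along the leaves via the flow of a vector field transverse to $\eta$ (Eliashberg's confoliation flow), spreading strict contactness to every leaf met by the closed transversal.

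The global assembly relies on the fact that, under the hypothesis $\eta\neq\zeta$, every leaf of the foliated part admits a closed transversal — this is a Novikov/Goodman-type statement whose sole exception on a closed oriented $3$-manifold is exactly the sphere foliation $\zeta$ on $S^2\times S^1$, which accounts for the exclusion in the theorem. Covering $F$ by finitely many such tubular neighborhoods and assembling the local perturbations by a partition of unity produces the desired global contact approximation. The "both signs" refinement for a genuine foliation is essentially free: the twisting $1$-form $x\,dy-y\,dx$ can be replaced by its negative, and for a foliation (where no prescribed sign has to be matched in an ambient contact region) the propagation argument is symmetric under this choice. The main technical obstacles I expect are (i) ensuring sign-consistency of the local perturbations on overlaps, so that positivity is not cancelled when bump functions are combined, and (ii) handling leaves that accumulate onto exceptional minimal sets, where one cannot pick a single transversal meeting every leaf of interest and must instead interpose an intermediate approximation step to arrange that transversals are "dense enough" relative to the minimal set before carrying out the propagation.
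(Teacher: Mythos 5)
The paper does not prove this statement: it is quoted from Eliashberg--Thurston's monograph \cite{E-T97} (Theorem 2.4.1 there), so your proposal has to be measured against their argument rather than against anything in this paper. Measured that way, it has a genuine gap at its central step: a foliation cannot be made contact in a tubular neighborhood of a closed transversal by a $C^0$-small perturbation supported in that neighborhood. Take your own model $\theta=dt$ near $\gamma=\{x=y=0\}$ and the rotationally symmetric version of your perturbation, $\theta'=dt+\varepsilon\,\phi(r)\,(x\,dy-y\,dx)=dt+g(r)\,d\vartheta$ with $g(r)=\varepsilon r^{2}\phi(r)$. Then
\[
\theta'\wedge d\theta' \;=\; g'(r)\,dt\wedge dr\wedge d\vartheta ,
\]
and since $g(0)=0$ and $g\equiv 0$ for $r\ge r_{0}$ while $g\not\equiv 0$, the derivative $g'$ must be negative somewhere in the transition annulus: the perturbed plane field is not even a positive confoliation there, let alone contact. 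Forcing positivity along the whole radius while returning to $dt$ at $r=r_{0}$ would require the coefficient curve $(1,g(r))$ to wind around the origin, i.e.\ a Lutz-type twist, which is not $C^0$-small. This failure is exactly why Eliashberg and Thurston do not twist along closed transversals: their local seed of contactness is created along a curve \emph{inside a leaf} that carries non-trivial linear holonomy, the holonomy contraction being what damps the perturbation consistently, and the contact property is then propagated along leafwise paths from the boundary of the already-contact zone.

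Two further points. First, the global case analysis in \cite{E-T97} runs over minimal sets: Sacksteder's theorem (this is where the $C^{2}$ hypothesis is genuinely used, a point your sketch never addresses) supplies a leaf curve with contracting linear holonomy when an exceptional minimal set is present, and foliations without holonomy are handled via Tischler's theorem by approximating with a fibration over $S^{1}$ and perturbing that fibration explicitly --- which is where the $S^{2}$-fiber exception actually enters. Second, your stated reason for excluding $\zeta$ is incorrect: $\{\mathrm{pt}\}\times S^{1}$ is a closed transversal to $\zeta$ meeting every leaf, so the exception is not detected by the absence of closed transversals; the non-approximability of the sphere foliation is a separate theorem. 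Finally, contact structures cannot be assembled by a partition of unity --- the contact condition is not convex, and in any case the sign problem above already occurs within a single chart, not merely on overlaps.
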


\section{Existence of $S^1$-invariant symplectic structures} \label{sec3}

Recall that there exists an obvious circle action on the 4-manifold $S^1\times M$ obtained by rotating the first factor of $S^1\times M$. The aim of this section is to show that every symplectic class on $S^1\times M$ can be represented by a symplectic form which is invariant under the obvious action of $S^1$.

In what follows, we assume that $S^1\times M$ admits a symplectic
structure $\omega$. If $M$ is $S^2\times S^1$, then clearly $M$ fibers over $S^1$. So from now on we also assume that $M$ is not $S^1\times S^2$, unless stated otherwise.  Let $X$ be the fundamental vector field associated to
the action of $S^1$, and let $\alpha=\iota_X \omega$. Since $\omega$ is a symplectic 2-form, $\alpha$ is clearly a nowhere vanishing 1-form on $S^1\times M$. Now choose an arbitrary point $t$ in $S^1$. Let $j_t$ denote the inclusion from $M$ into $S^1\times M$ given by $x\mapsto (t, x)$. Then we obtain
a nowhere vanishing 1-form $\beta_t$ by the pull-back of the 1-form
$\alpha$ restricted to $\{ t \}\times M$ via the inclusion $j_t$. In
other words, $\beta_t=j_t^\ast (\alpha|_{\{ t \}\times M})$. Then we have the following proposition.

\begin{prop} \label{prop3.1}
The differential 3-form $\beta_{t}\wedge d_M\beta_{t}$ should vanish identically for all $t\in S^1$.
\end{prop}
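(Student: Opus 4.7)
The plan is first to reduce the assertion to a pointwise algebraic statement in coordinates adapted to the product $S^{1}\times M$, and then to rule out its failure by contradiction using the relative Giroux correspondence recalled in Section~\ref{sec2}. Specifically, I would write $\omega=dt\wedge\gamma+\rho$, where $\gamma$ is a $t$-dependent $1$-form and $\rho$ a $t$-dependent $2$-form on $M$, both annihilating the generator $X=\partial_{t}$ of the $S^{1}$-action. Then $\alpha=\iota_{X}\omega=\gamma$ and $\beta_{t}=j_{t}^{\ast}\alpha=\gamma|_{t}$, so $d_{M}\beta_{t}=(d_{M}\gamma)|_{t}$. Expanding $d\omega=0$ in this decomposition yields the two structure equations $d_{M}\rho_{t}=0$ and $d_{M}\gamma_{t}=\partial_{t}\rho_{t}$, while the non-degeneracy $\omega\wedge\omega\ne 0$ forces $\gamma_{t}\wedge\rho_{t}$ to be a nowhere-vanishing top form on $M$ for every $t$. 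Under these identifications the claim $\beta_{t}\wedge d_{M}\beta_{t}\equiv 0$ is equivalent to the pointwise identity $\gamma_{t}\wedge\partial_{t}\rho_{t}\equiv 0$ on $M$.

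Next I would argue by contradiction. Suppose $\beta_{t_{0}}\wedge d_{M}\beta_{t_{0}}$ is nonzero at some $x_{0}\in M$; after possibly reversing orientation, assume it is strictly positive on a neighborhood of $x_{0}$, so that $\beta_{t_{0}}$ is there a positive contact form and $\tau_{t_{0}}:=\ker\beta_{t_{0}}$ is a positive contact structure. Shrinking to a compact codimension-zero submanifold $V\subset M$ with convex boundary --- which is possible by the genericity of convex hypersurfaces recalled in Section~\ref{sec2} --- the relative Giroux correspondence (Theorem~\ref{thm2.2}) produces a compatible partial open book decomposition $(S,P,h)$ of $(V,\tau_{t_{0}}|_{V})$. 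By the Thurston--Winkelnkemper construction cited in Section~\ref{sec2}, I may furthermore arrange $\beta_{t_{0}}$, up to contact isotopy, to be in a preferred local model whose kernel is $C^{0}$-close to the tangent planes of the pages away from the binding.

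With this preferred local form of $\beta_{t_{0}}$ in hand, the final step is to combine three constraints on $\omega$ to reach a contradiction: (i) the structure equation $d_{M}\gamma_{t}=\partial_{t}\rho_{t}$, (ii) the pointwise non-degeneracy $\gamma_{t}\wedge\rho_{t}\ne 0$, and (iii) the $S^{1}$-periodicity, which forces $\int_{0}^{1}\partial_{t}\rho_{t}\,dt=\rho_{1}-\rho_{0}=0$ and hence that the averaged $1$-form $\bar{\gamma}:=\int_{0}^{1}\gamma_{t}\,dt$ is closed on $M$. Together with the strict positivity of $\gamma_{t_{0}}\wedge d_{M}\gamma_{t_{0}}$ near $x_{0}$ and the page-tangent local form of $\beta_{t_{0}}$ coming from the open book, these global constraints are incompatible with the local picture supplied by Giroux, yielding the desired contradiction.

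The hard part will be precisely this last step --- extracting the contradiction from the interaction of $d\omega=0$, $\omega^{2}>0$, and $S^{1}$-periodicity against the local open-book coordinates. This is why the relative Giroux correspondence is essential: it supplies a canonical local coordinate system adapted to the hypothesized contact structure $\tau_{t_{0}}$, and without such a model one has no handle on the pointwise form of $\gamma_{t_{0}}$. A purely cohomological attack is insufficient, since the helicity-type integral $\int_{M}\gamma_{t}\wedge d_{M}\gamma_{t}$ is not a priori zero, and averaging $\omega$ over the $S^{1}$-action does not automatically preserve non-degeneracy (as noted in the introduction by the $S^{2}\times S^{2}$ example).
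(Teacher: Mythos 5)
Your setup is sound and runs parallel to the paper's: writing $\omega = dt\wedge\gamma_t+\rho_t$, the structure equations $d_M\rho_t=0$ and $d_M\gamma_t=\partial_t\rho_t$ and the nondegeneracy condition $\gamma_t\wedge\rho_t\ne 0$ are all correct, and the reduction of the claim to $\gamma_{t}\wedge\partial_t\rho_{t}\equiv 0$ is right. The passage to a compact contact piece with convex boundary and the invocation of the relative Giroux correspondence (Theorem \ref{thm2.2}) is also exactly the paper's strategy in its (Case 3). But your proof stops at the only step that carries the content of the proposition: no contradiction is actually derived from the open book model, and you say as much (``the hard part will be precisely this last step''). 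The paper's own route is concrete at this point: it first shows ${\mathcal L}_X\omega$ is nowhere vanishing wherever $d_M\beta_t\ne 0$ (Lemma \ref{lem3.1}, via $j_t^\ast({\mathcal L}_X\omega)=d_M\beta_t$), then uses the open book coordinates to write $\omega$ in the local normal form $\pi^\ast(dt\wedge ds_t)+f\,d_M\beta_t+ds_t\wedge\delta$ (Lemma \ref{lem3.3}), computes ${\mathcal L}_X\omega=d(-\iota_X\delta)\wedge ds_t$, and shows $\iota_X\delta=0$, hence ${\mathcal L}_X\omega=0$ --- the contradiction. Whatever one thinks of that computation, it is the entire argument, and your sketch contains no substitute for it.

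Moreover, the specific route you gesture at --- exploiting $S^1$-periodicity via $\int_0^1\partial_t\rho_t\,dt=0$ and the closedness of $\bar\gamma=\int_0^1\gamma_t\,dt$ --- cannot work as stated. The integral $\int_0^1\gamma_t\wedge\partial_t\rho_t\,dt$ need not vanish even though $\int_0^1\partial_t\rho_t\,dt=0$, because $\gamma_t$ itself depends on $t$; closedness of the averaged $1$-form says nothing about $\gamma_{t_0}\wedge d_M\gamma_{t_0}$ at a fixed $t_0$. In addition, the proposition must be proved for each $t$ separately, and the paper's (Case 2) and (Case 3) handle hypotheses that hold only for $t$ in a small interval $I$, where there is no circle action and no periodicity to average over; a global-in-$t$ integral argument is therefore unavailable precisely where it would be needed. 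You need a pointwise (or at least local-in-$t$) mechanism for the contradiction, which is what the paper's Lie-derivative computation in open book coordinates is designed to supply.
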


\begin{proof}
We divide the proof into the following three cases:
\smallskip

\noindent{\bf (Case 1)} First of all, assume that $\beta_t\wedge d_M
\beta_t$ is non-zero for all $t$. Thus the 2-plane field $\xi_t=\ker
\beta_t$ is a family of contact structures on $M$. It is obvious
that $d_M \beta_t$ is a nowhere vanishing 2-form on $M$. Moreover,
the following holds.

\begin{lemma} \label{lem3.1}
Under our assumption, the Lie derivative ${\mathcal L}_X \omega$ is
nowhere vanishing on $S^1 \times M$.
\end{lemma}

\begin{proof}
The proof follows from the Cartan's formula. Indeed, it suffices to note that
\begin{equation} \label{eq3.1}
\begin{split}
0 &\ne d_M\beta_t =d_M j_t^\ast (\iota_X \omega)=j_t^\ast d \iota_X
\omega=j_t^\ast (d\iota_X \omega+ \iota_X d\omega)\\
&=j_t^\ast ({\mathcal L}_X \omega).
\end{split}
\end{equation}
This completes the proof of Lemma \ref{lem3.1}.
\end{proof}

Next we can show the following

\begin{lemma} \label{lem3.2}
The symplectic $2$-form $\omega$ restricted to the contact structure
$\xi_t=\ker \beta_t$ along $\{ t \} \times M$ is non-zero.
\end{lemma}

\begin{proof}
To see it, first note that there exists a Reeb vector field $Z_t$ on
$M$, depending on the parameter $t$, such that
\begin{equation} \label{eq3.2}
1=\beta_t(Z_t)=j_t^\ast (\iota_X \omega)(Z_t)=\omega(X,
(j_t)_\ast(Z_t)).
\end{equation}
Since, along each point $(t, x)$ in $S^1\times M$, the vector space
spanned by $X$ and $(j_t)_\ast(Z_t)$ is transversal to the contact
plane $\xi_t$ and the equation \eqref{eq3.2} is satisfied,  we conclude that the restriction $\omega|_{\xi_t}$ of the symplectic form $\omega$ on $S^1\times M$ is non-zero.
\end{proof}

Now, we apply the result of Giroux in \cite{Gi} concerning the open
book decomposition of a contact 3-manifold (Theorem \ref{thm2.2}).
In our situation, we can choose a family of open book decompositions
along a connected binding $B_t$ associated to the contact structure
$\beta_t$ on $M$, so that the parameter $s_t$ for the base manifold
$S^1$ for the fibration associated to the open book decomposition is
given by the Reeb vector field $Z_t$. We recall that by the way of
the construction of the open book decomposition the contact plane
$\xi_t$ can be made arbitrarily close to the pages $F_t$. Thus it
follows from Lemma \ref{lem3.2} that, along $\{ t \} \times M$,
$\omega$ restricted to the pages $F_t$ of the open book
decomposition is also non-zero. That is, we see that, along $\{ t \}
\times M$, $\omega$ restricted to the pages $F_t$ of the open book
decomposition is a volume form away from the binding $B_t$. Recall also that $d_M\beta_t$ is a
volume form on the pages $F_t$ away from the binding $B_t$ by the construction of the open book
decomposition.

Let $M_{B_t}$ be the result of 0-surgery along $B_t$. Then we have a
fibration
\[
\pi: S^1\times M_{B_t} \to S^1\times S^1,
\]
and $t$ and $s_t$ will denote the first and second angular
coordinates on the base manifold $S^1\times S^1$ of the fibration
$\pi$, respectively. Let $N(B_t)$ denote a tubular neighborhood of
the binding $B_t$. Since, along $\{ t \} \times M$, $d_M\beta_t$ and
$\omega$ are both nowhere vanishing 2-forms restricted to the
contact plane $\xi_t$, we can choose a smooth function $f$ defined
over $S^1\times (M_B\backslash N(B))$ satisfying the following two
properties:

\begin{itemize}
\item $f$ is nowhere vanishing over $S^1\times (M_{B_t}\backslash N(B_t))$
and
\item $f\cdot d_M\beta_t$ coincides with $\omega$, when restricted to the
contact plane $\xi_t$.
\end{itemize}

Then the following lemma holds.

\begin{lemma} \label{lem3.3}
On the manifold $S^1\times (M_{B_t}\backslash N(B_t))$ which can be
identified with $S^1\times (M\backslash N(B_t))$, the symplectic
$2$-form $\omega$ can be written locally as the form
\begin{equation} \label{eq3.3}
\omega=\pi^\ast(dt\wedge ds_t)+ f(t,x) d_M\beta_t + ds_t\wedge
\delta,
\end{equation}
where $\delta$ is a 1-form on $S^1\times (M\backslash N(B_t))$ and
$x$ denotes a local coordinate on $M$.
\end{lemma}

\begin{proof}
To see it, notice first that $\omega(Z_t, W_t)$ may be non-zero for
the Reeb vector field $Z_t$ and $W_t\in\xi_t$, while $\omega(X,
W_t)$ should be zero for $W_t\in\xi_t$. Thus in local coordinates
the symplectic form $\omega$ should have only the terms involving
$\pi^\ast(dt\wedge ds_t)$, $d_M\beta_t$ and $ds_t\wedge \delta$. Due
to the equation \eqref{eq3.2}, the coefficient of $\pi^\ast(dt\wedge
ds_t)$ should be 1, as stated. Thus we are done.
\end{proof}

Finally, over $S^1\times (M_{B_t}\backslash N(B_t))$ we compute the
Lie derivative ${\mathcal L}_X \omega$ explicitly. To do so, note
that we have
\begin{equation} \label{eq3.4}
\begin{split}
{\mathcal L}_X \omega &= d \iota_X(\pi^\ast(dt\wedge
ds_t)+f(t,x) d_M\beta_t+ ds_t\wedge \delta)\\
&=d (-\iota_X\delta)\wedge ds_t.
\end{split}
\end{equation}
However, since we have
\[
1=\omega(X,(j_t)_\ast(Z_t))=1-\iota_X\delta
\]
by the equations \eqref{eq3.2} and \eqref{eq3.3}, $\iota_X\delta$
should be zero. Thus it follows from \eqref{eq3.4} that ${\mathcal
L}_X \omega=0$. This clearly contradicts to Lemma \ref{lem3.1}.
That is, this case does not occur.

\smallskip

\noindent {\bf (Case 2)} We next assume that, for some $t=t_0$ in $S^1$, $\beta_{t}\wedge d_M\beta_{t}$ is non-zero for all $x\in M$. By the continuity of smooth differential forms, $\beta_{t}\wedge d_M\beta_{t}$ should be non-zero for all $t$ in some sufficiently small open interval $I$
of $t_0$ and all $x\in M$.

Then apply the arguments in (Case 1) to the manifold $I\times M$ instead of $S^1\times M$. Then we can also derive a contradiction in this case. In more detail, the manifold $I \times M$ admits a symplectic structure, denoted $\omega$, by the restriction of the symplectic form $\omega$ on $S^1\times M$ to $I\times M$. There still exists the fundamental vector field $X$ on $I\times M$ associated to the natural action of $S^1$ on $S^1\times M$, since the interval is regarded as an open submanifold of $S^1$. However, clearly there is no $S^1$-action on $I$. Using this fundamental vector field $X$ on $I\times M$ and the fact that $\beta_{t}\wedge d_M\beta_{t}$ is non-zero on $I\times M$, as in Lemma \ref{lem3.1} we can show that the Lie derivative ${\mathcal L}_X \omega$ is nowhere vanishing on $I\times M$. Furthermore, one can check that other arguments as well as Lemmas \ref{lem3.2} and \ref{lem3.3} go through without any modification. So, we can conclude that this case does not occur, either.

\smallskip

\noindent {\bf (Case 3)} In this case we assume that, for some $t=t_0$ in
$S^1$ and some $x_0\in M$, $\beta_{t}\wedge d_M\beta_{t}$ is non-zero. Once again it follows from the continuity of smooth differential forms that $\beta_{t}\wedge d_M\beta_{t}$ should be non-zero for all
$t$ in some sufficiently small open interval $I$ of $t_0$ and some $x_0\in M$.

In order to apply the arguments of the previous cases, we need to take the
contact part
\[
V(\beta_t)=\{ x\in M \ | \ \beta_{t}\wedge d_M\beta_{t}\ne 0\
\text{for}\ t\in I \}.
\]
For simplicity,  for each $t\in I$ we shall denote by $W_t$ the closure of the connected component of the contact part $V(\beta_t)$ which contains $x_0$. Then for each $t\in I$, $W_t$ is a compact contact submanifold of $M$ of codimension 0 with (possibly empty) boundary, since $\beta_{t}\wedge d_M\beta_{t}$ is a nowhere vanishing $3$-form  on $V(\beta_t)$ and so is a volume form there. Fortunately, in the present paper we do not need to know the precise information of $W_t$, unlike to the case in the paper \cite{H-T}.

Assume now that the boundary is non-empty. (Otherwise, we are reduced to (Case 2).) Then, as already mentioned in Section \ref{sec2}, we may assume without loss of generality that the boundary is convex. Thus for each $t\in I$ we obtain a compact contact $3$-manifold $W_t$ with convex boundary. It is also true that as in (Case 2) above there still exists the fundamental vector field $X$ on $\cup_{t\in I} \{ t \}\times W_t\subset S^1\times M$ associated to the natural action of $S^1$ on $S^1\times M$, since the interval $I$ is again regarded as an open submanifold of $S^1$. But this case is slightly different from (Case 2) in that $W_t$ is a compact contact $3$-manifold with convex boundary. So we need to use the relative Giroux correspondence (Theorem \ref{thm2.2}) instead of the Giroux correspondence (Theorem \ref{thm2.1}). In other words, for each $t\in I$ apply Theorem \ref{thm2.2} to $W_t$ in order to obtain its partial open book decomposition $(S_t, P_t, h_t)$. Thus $W_t$ can now be described as the gluing of two handlebodies $H_t$ and $N_t$ by the map $h_t$ whose boundary is given as in \eqref{eq2.1}. However, since all the arguments in (Case 1) and (Case 2) are essentially local, those arguments applied to the compact contact $3$-manifold $W_t$ with convex boundary and symplectic $4$-manifold $\cup_{t\in I} \{ t \} \times W_t$ equipped with the symplectic form induced from $S^1\times M$ will again go through without any modification. This in turn gives rise to a contradiction for this case, which means that this case does not occur, either.

This completes the proof of Proposition \ref{prop3.1}.
\end{proof}

With this understood, the following theorem will play a crucial role in the proof of Theorem \ref{thm4.1}.

\begin{theorem} \label{thm3.1}
The symplectic class $[\omega]$ on $S^1\times M$ can be represented by a symplectic form which is invariant under the obvious action of $S^1$.
\end{theorem}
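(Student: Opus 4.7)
The plan is to take the $S^1$-averaged form and to verify its non-degeneracy using Proposition \ref{prop3.1}. Explicitly, let $\phi_s(t,x) = (t+s,x)$ denote the circle action on $S^1\times M$, and set
\[
\omega' = \frac{1}{2\pi}\int_0^{2\pi}\phi_s^*\omega\,ds.
\]
Then $\omega'$ is $S^1$-invariant by construction, closed because each $\phi_s^*\omega$ is closed, and cohomologous to $\omega$ via the standard homotopy identity $\phi_s^*\omega-\omega = d\bigl(\int_0^s\phi_r^*\iota_X\omega\,dr\bigr)$ (Cartan's magic formula plus the fundamental theorem of calculus), which remains exact after averaging over $s$.

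To read off the structure of $\omega'$, I would decompose $\omega = dt\wedge\beta_t + \nu_t$, where $\beta_t = j_t^*\iota_X\omega$ is the 1-form of Proposition \ref{prop3.1} and $\nu_t$ is a 2-form on $M$ depending on $t$. The closed condition $d\omega = 0$ gives $\partial_t\nu_t = d_M\beta_t$ and $d_M\nu_t = 0$. Then $\omega' = dt\wedge\bar\beta + \bar\nu$, where $\bar\beta$ and $\bar\nu$ are the $t$-averages; integrating these two relations over the circle yields that $\bar\beta$ and $\bar\nu$ are closed on $M$. Non-degeneracy of $\omega'$ thus reduces to showing that $\bar\beta\wedge\bar\nu$ is a nowhere-vanishing 3-form on $M$.

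This pointwise non-degeneracy is the crux, and it is here that Proposition \ref{prop3.1} enters. Since $\omega\wedge\omega = 2\,dt\wedge\beta_t\wedge\nu_t$ is a positive volume form on $S^1\times M$, each $\beta_t\wedge\nu_t$ is a positive volume form on $M$ with respect to a fixed orientation. By Proposition \ref{prop3.1}, $\beta_t\wedge d_M\beta_t \equiv 0$, so the nowhere-zero 1-form $\beta_t$ defines by Frobenius a cooriented codimension-1 foliation $\mathcal{F}_t$ on $M$, and we may write $d_M\beta_t = \mu_t\wedge\beta_t$. Combined with $\partial_t\nu_t = d_M\beta_t$, this ideal condition tightly constrains how the pair $(\beta_t,\nu_t)$ evolves with $t$, and should force the positivity of $\beta_t\wedge\nu_t$ to propagate to positivity of $\bar\beta\wedge\bar\nu$ at every point of $M$.

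The main obstacle is precisely this pointwise positivity step. As emphasized in the introduction (e.g., the product form on $S^2\times S^2$), averaging a family of symplectic forms does not produce a symplectic form in general, so the argument must genuinely use the confoliation structure coming from Proposition \ref{prop3.1}. I expect the cleanest route is to kill the $t$-dependence of $\omega$ in a single step by subtracting the exact form $d\tilde H$ with $\tilde H = \int_0^t(\beta_s-\bar\beta)\,ds$ (well-defined on $S^1\times M$ since its integrand has mean zero in $t$), which transforms $\omega$ into the $S^1$-invariant closed 2-form $dt\wedge\bar\beta + \nu_0$; the foliation structure $d_M\beta_t = \mu_t\wedge\beta_t$ from Proposition \ref{prop3.1} should then allow one to verify directly that $\bar\beta\wedge\nu_0$ is nowhere zero on $M$, completing the proof.
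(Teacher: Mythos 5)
Your setup is correct and the bookkeeping is cleaner than anything in the paper: the decomposition $\omega = dt\wedge\beta_t+\nu_t$, the closedness relations $\partial_t\nu_t=d_M\beta_t$ and $d_M\nu_t=0$, the equivalence of non-degeneracy with $\beta_t\wedge\nu_t\neq 0$ pointwise (since $\nu_t\wedge\nu_t=0$ on a $3$-manifold), and the identity $\omega-d\tilde H=dt\wedge\bar\beta+\nu_0$ all check out, so you really do produce a closed $S^1$-invariant representative of $[\omega]$. The gap is exactly where you flag it, and it is a genuine missing idea rather than a routine verification: nothing in the proposal establishes that $\bar\beta\wedge\nu_0$ (or $\bar\beta\wedge\bar\nu$) is nowhere zero, and the phrases ``should force'' and ``should then allow one to verify directly'' carry the entire weight of the theorem. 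Concretely, Proposition \ref{prop3.1} gives $d_M\beta_t=\mu_t\wedge\beta_t$, hence $\nu_t=\nu_0+\int_0^t\mu_r\wedge\beta_r\,dr$; expanding $\bar\beta\wedge\nu_0$ against the known positivity of each $\beta_t\wedge\nu_t$ leaves uncontrolled cross terms of the shape $\beta_t\wedge\mu_r\wedge\beta_r$ with $t\neq r$, and there is no pointwise algebraic reason these cannot conspire to annihilate the average at some point of $M$. The one sub-case where your argument does close is $d_M\beta_t\equiv 0$: then $\nu_t\equiv\nu_0$ and $\bar\beta\wedge\nu_0=\frac{1}{2\pi}\int_0^{2\pi}\beta_t\wedge\nu_t\,dt>0$ by linearity in $\beta$. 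In the genuinely foliated case you have supplied no mechanism, and since you yourself invoke the $S^2\times S^2$ example to show that averaging fails in general, the burden is on you to exhibit the mechanism by which the integrability condition rescues it.

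For comparison, the paper does not average at all. It argues by contradiction: assuming no invariant representative exists, it takes $\mathcal{L}_X\omega\neq 0$ as the working hypothesis, invokes Proposition \ref{prop3.1} to get $\beta_t\wedge d_M\beta_t\equiv 0$, and then splits into two cases. If $d_M\beta_t\equiv 0$ it computes $\mathcal{L}_X\omega=0$ directly, a contradiction; otherwise $\ker\beta_t$ is a foliation and the Eliashberg--Thurston theorem (Theorem \ref{thm2.3}) is used to perturb it to a contact structure and $\omega$ to a nearby symplectic form, which feeds back into the contact cases already excluded in the proof of Proposition \ref{prop3.1}. So the two routes diverge precisely at the point where yours is incomplete: either supply a genuinely new positivity argument for the averaged form in the foliated case, or abandon averaging and follow the perturbation route.
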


\begin{proof}
We prove this theorem by contradiction. That is, suppose that the cohomology class $[\omega]$ cannot be represented by any $S^1$-invariant symplectic form $\omega$ under the obvious $S^1$-action. Then we would have
\begin{equation} \label{eq3.5}
{\mathcal L}_X\omega\ne 0.
\end{equation}
Note also that by Proposition \ref{prop3.1} the differential 3-form $\beta_{t}\wedge d_M\beta_{t}$ vanishes identically for all $t\in S^1$. Then there are two possibilities we have to consider: $d_M\beta_t$ vanishes identically on $\{ t \}\times M$ for all $t\in S^1$ or not.

So, suppose first that $d_M\beta_t$ vanishes identically on $M$ as well. Then it can be shown that the Lie derivative ${\mathcal L}_X \omega$ vanishes identically. To see it, notice that it follows from the identity \eqref{eq3.1} that we have $j_t^\ast({\mathcal L}_X \omega)=0$. Thus we have ${\mathcal L}_X\omega(Z_t, W_t)=0$ for any vector fields $Z_t$ and $W_t$ on $\{ t \}\times M$ for each $t\in S^1$.
Moreover, since $\omega$ is a symplectic form on $S^1\times M$, we can choose a Darboux chart in a neighborhood of a point $(t, x)$ whose coordinate vectors are given by non-zero vector fields $X_0=X$ and $X_i$ $(i=1,2,3)$. With this coordinate chart, we have
\[
{\mathcal L}_X \omega(X, X_i)=d\iota_X\omega(X, X_i)=X(\iota_X \omega(X_i))=X(1)=0,
\]
which implies that ${\mathcal L}_X\omega(X, Y_t)=0$ for all vector field $Y_t$ of $\{ t \}\times M$. Therefore, we can conclude that in this case ${\mathcal L}_X\omega$ vanishes identically. But this clearly contradicts to the assumption \eqref{eq3.5}.

On the other hand, if $d_M\beta_t$ does not vanish on $\{ t \}\times M$, we need to use the result of Eliashberg and Thurston about perturbing a confoliation into a contact structure. If the 3-manifold
$M$ is $S^2\times S^1$, clearly $M$ fibers over $S^1$, as mentioned earlier. Thus we may
assume that our foliation $\xi_t$ is different from the foliation
$\zeta$ on $S^2\times S^1$. Now if we apply Theorem \ref{thm2.3} to
$\xi_t$ then we have a contact structure $\tilde\xi_t=\ker \tilde
\beta_t$ which is a $C^0$-approximation to $\xi_t$. Since
$\tilde\xi_t$ is a $C^0$-approximation of $\xi_t$, the symplectic
2-form $\omega$ can also be $C^0$-approximated by a symplectic
2-form $\tilde\omega$ on $S^1\times M$ so that
$\tilde\beta_t=j^\ast_t(\iota_X \tilde\omega|_{ \{ t \} \times M})$.
So we are essentially led to the (Case 1), (Case 2), or (Case 3) of Proposition \ref{prop3.1}, which has already shown not to occur.

Therefore, for either case we have derived a contradiction under our assumption \eqref{eq3.5}. This completes the proof of Theorem \ref{thm3.1}.
\end{proof}

\begin{remark}
Note that Theorem \ref{thm3.1} does not imply that any arbitrary symplectic form $\omega$ on $S^1\times M$ is always $S^1$-invariant under the $S^1$-action on the first factor of $S^1\times M$. This can be easily seen by taking $M$ to be the $3$-dimensional torus $T^3$. That is, if the theorem implies that any symplectic form $\omega$ on $S^1\times T^3$ is always $S^1$-invariant under the obvious $S^1$-action, the symplectic form on $S^1\times T^3=T^4$ should also be invariant under the obvious $S^1$-action of the last three $S^1$-factors of $T^4$. So we can conclude that every symplectic form on $T^4$ should be invariant under the componentwise $T^4$-action on $T^4$. But obviously this is not the case for $T^4$.
\end{remark}

\section{Proofs of Theorems \ref{thm4.1} and \ref{thm4.2}}  \label{sec4}

In this section we present the proofs of main theorems of the
present paper. Once we have established the existence of an $S^1$-invariant symplectic structure on $S^1\times M$, it is a fairly standard procedure to complete the proof of Conjecture \ref{conj1.1}. For the sake of reader's convenience, we give its proof relatively in detail.

To do so, we begin with the following well-known lemma which says that when
the cohomology class $[\iota_X\omega]$ is not integral and non-zero, by some suitable perturbation we can always make it integral.

\begin{lemma} \label{lem4.1}
Let $\omega$ be an $S^1$-invariant symplectic form on a closed oriented $4$-manifold
$N$ such that $[\iota_X \omega]$ is non-zero. Then $N$ admits an $S^1$-invariant symplectic form $\hat \omega$ such that $[\iota_X \hat\omega]$ is non-zero and integral.
\end{lemma}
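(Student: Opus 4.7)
The plan is to add to $\omega$ a $C^0$-small closed $S^1$-invariant $2$-form so that $[\iota_X\omega]$ becomes rational, and then rescale by a positive integer to clear denominators. Rescaling preserves $S^1$-invariance, the symplectic condition, and non-vanishing of the class, while a sufficiently small perturbation keeps $[\iota_X\omega]$ nonzero by continuity; hence the real work is the rationalization step.

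First I would record that, exactly as in Section~\ref{sec3}, $S^1$-invariance gives $\mathcal{L}_X\omega=0$, so Cartan's formula yields $d\iota_X\omega=0$ and $[\iota_X\omega]\in H^1(N;\mathbb{R})$. The same reasoning defines a linear map
\[
\Phi\colon \{\text{closed }S^1\text{-invariant }2\text{-forms on }N\}\longrightarrow H^1(N;\mathbb{R}),\qquad \Phi(\sigma)=[\iota_X\sigma],
\]
and I denote its image by $V$. Note that $[\iota_X\omega]\in V$.

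The key claim is that $V$ is a \emph{rational} subspace of $H^1(N;\mathbb{R})$, i.e.\ spanned by rational classes. I would prove this by the standard averaging trick: since $S^1$ is connected, it acts trivially on $H^{\ast}(N;\mathbb{R})$, so for any closed $2$-form $\tau$ the $S^1$-average $\bar\tau$ is closed, $S^1$-invariant, and cohomologous to $\tau$. Applied to closed representatives of rational classes, this shows that every element of $H^2(N;\mathbb{Q})$ admits a closed $S^1$-invariant representative; taking $\iota_X$ of such representatives produces rational classes in $V$ that span $V$ over $\mathbb{Q}$. Consequently $V\cap H^1(N;\mathbb{Q})$ is dense in $V$.

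Given this, I would fix closed invariant forms $\sigma_1,\dots,\sigma_r$ whose $\iota_X$-classes form a $\mathbb{Q}$-basis of $V$, and consider the family $\omega_s=\omega+\sum_i s_i\sigma_i$ for $s\in\mathbb{R}^r$. This family is closed and $S^1$-invariant, and by openness of non-degeneracy on the compact manifold $N$ it remains symplectic for all sufficiently small $s$. The assignment $s\mapsto[\iota_X\omega_s]=[\iota_X\omega]+\sum_i s_i[\iota_X\sigma_i]$ is an affine surjection onto $V$, so by the density established above there exist arbitrarily small $s$ for which $[\iota_X\omega_s]\in (V\cap H^1(N;\mathbb{Q}))\setminus\{0\}$. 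Multiplying $\omega_s$ by a positive integer that clears the denominators yields the desired $\hat\omega$. The main obstacle is the rational-subspace claim for $V$; once that is in place, the perturbation and rescaling steps are routine.
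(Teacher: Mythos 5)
Your proof is correct and follows essentially the same route as the paper's: perturb $\omega$ by a small closed $2$-form averaged over the $S^1$-action, invoke openness of the symplectic condition to keep non-degeneracy and non-vanishing of the class, and rescale by a positive integer. The only real difference is organizational --- you rationalize $[\iota_X\omega]$ in $H^1(N;\mathbb{R})$ directly via the rational-subspace observation, whereas the paper first makes $[\omega]$ rational in $H^2(N;\mathbb{R})$ and then appeals to the same fact you use, namely that contracting a closed $S^1$-invariant form with rational class against $X$ again yields a rational class.
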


\begin{proof}
If $N$ admits an $S^1$-invariant symplectic form $\omega'$ such that
$[\omega']$ is rational, then we can easily obtain an
$S^1$-invariant symplectic form $\hat\omega$ such that
$[\hat\omega]$ is integral by multiplying some suitable integer to
$\omega'$. Note also that the class $[\iota_X \omega']$ is rational
if the class $[\omega']$ is.

So assume now that the class $[\omega]$ is not rational. It is clear that
there exists an arbitrary small closed 2-form $\eta$ such that
$\omega+\eta$ represent a rational cohomology class. Let $\hat \eta$
be the average of $\eta$ over the $S^1$-action. Since $S^1$ is
connected, for $\nu \in S^1$ $\nu^\ast\eta$ is a closed 2-form
representing the same cohomology class as $\eta$. Thus $\omega+
\hat\eta$ and $\omega+\eta$ have the same rational cohomology class.
Note also that $\omega'=\omega+\hat\eta$ is symplectic, provided
that $\eta$ is sufficiently small. By the openness of symplectic
condition again, we can further choose $\omega'$ in such a way that the class $[\iota_X
\omega']$ is non-zero. This completes the proof.
\end{proof}

Finally we are ready to prove the main theorems.

\begin{theorem} \label{thm4.1}
Let $M$ be a closed oriented $3$-manifold such that $S^1\times M$
admits a symplectic structure $\omega$. Then $M$ fibers over $S^1$.
\end{theorem}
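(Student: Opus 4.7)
The plan is to leverage the two tools the paper has just put in place, Theorem~\ref{thm3.1} (existence of an $S^1$-invariant representative) and Lemma~\ref{lem4.1} (integrality perturbation), and then invoke Tischler's fibration theorem. The whole argument should fit in a few lines once the right $1$-form is produced.

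First I would replace $\omega$ by a cohomologous $S^1$-invariant symplectic form, using Theorem~\ref{thm3.1}; call this new form again $\omega$. The fundamental vector field $X$ of the rotation is nowhere zero, and the $1$-form $\alpha:=\iota_X\omega$ is nowhere vanishing because $\omega$ is nondegenerate. The key computation is Cartan's formula: since $\omega$ is closed and now $S^1$-invariant, $d\alpha = \mathcal L_X\omega - \iota_X d\omega = 0$, so $\alpha$ is a closed nowhere-vanishing $1$-form on $S^1\times M$.

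Next I would show that $\alpha$ is basic with respect to the projection $\pi:S^1\times M\to M$. Invariance of $\omega$ gives $\mathcal L_X\alpha = 0$, and $\iota_X\alpha = \omega(X,X)=0$, so $\alpha = \pi^*\beta$ for a unique $1$-form $\beta$ on $M$. Equivalently, for any $t\in S^1$ the pullback $\beta = j_t^*\alpha$ is independent of $t$. The form $\beta$ is closed and nowhere vanishing on $M$. Note that $\beta$ cannot be exact: an exact nowhere-vanishing closed $1$-form $df$ on the closed manifold $M$ would force $f$ to have no critical points, which is impossible, so $[\beta]\ne 0$ in $H^1(M;\mathbb R)$, and therefore $[\alpha]=\pi^*[\beta]\ne 0$ in $H^1(S^1\times M;\mathbb R)$.

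Having a non-zero class $[\alpha]=[\iota_X\omega]$, I would then invoke Lemma~\ref{lem4.1} to perturb $\omega$ slightly (keeping it $S^1$-invariant) so that $[\iota_X\omega]$ becomes integral, which in turn makes $[\beta]$ an integral class; openness of both the nondegeneracy and the nowhere-vanishing conditions preserves the previous properties. Finally I would apply Tischler's argument \cite{Ti}: the closed nowhere-vanishing $1$-form $\beta$ defines a foliation of $M$ by codimension-one leaves, and a small rational approximation of $\beta$ (or directly its integral representative) produces a submersion $M\to S^1$, which on a closed manifold is automatically a locally trivial fibration. Thus $M$ fibers over $S^1$, completing the proof.

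The main obstacle has already been absorbed by Theorem~\ref{thm3.1}; once $S^1$-invariance is in hand, the only subtlety remaining is verifying that the resulting descended $1$-form $\beta$ really is closed, nowhere vanishing and cohomologically non-trivial, so that Tischler's theorem may be applied without further ingredients.
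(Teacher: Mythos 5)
Your proposal is correct and follows essentially the same route as the paper: invoke Theorem~\ref{thm3.1} to make $\omega$ invariant, use Lemma~\ref{lem4.1} for integrality, rule out $[\iota_X\omega]=0$, and conclude via Tischler. The only cosmetic difference is that you explicitly descend $\iota_X\omega$ to a closed nowhere-vanishing integral $1$-form $\beta$ on $M$ and apply Tischler there (with non-exactness ruled out by the no-critical-points argument), whereas the paper cites McDuff's generalized moment map $\mu:S^1\times M\to S^1$ and restricts it to a slice $\{t\}\times M$ --- the two are interchangeable.
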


\begin{proof}
By Theorem \ref{thm3.1}, we may assume that the symplectic
structure $\omega$ is $S^1$-invariant. Further, we may also assume that the
class $[\iota_X \omega]$ on $S^1\times M$ is integral by Lemma
\ref{lem4.1}.

In order to prove the theorem, we first consider the case where the class $[\iota_X \omega]$ is
zero. In this case, there exists a function, called the moment map $\mu:
S^1\times M\to {\bf R}$ such that $\iota_X \omega=d\mu$. Thus the
$S^1$-action is Hamiltonian. But it is clear that the $S^1$-action
on $S^1\times M$ does not have any fixed points that are critical points of $\mu$. This gives rise to a
contradiction to the fact that any Hamiltonian function on a closed symplectic
manifold should have at least two critical points (e.g., extremal
points). Therefore we can conclude that the class $[\iota_X \omega]$ is actually non-zero. Under this
condition, McDuff proved in \cite{Mc} that by using an argument of D.
Tischler in \cite{Ti}, there exists a generalized moment map $\mu:
S^1\times M\to S^1$ satisfying $\iota_X \omega=\mu^\ast(dt)$. Thus
by restricting the map $\mu$ to $\{\text{a point}\}\times M$, we
easily obtain a fibration of $M$ over $S^1$. This completes the
proof of Theorem \ref{thm4.1}.
\end{proof}

Now we close this section with a proof of Theorem \ref{thm1.1} as follows.

\begin{theorem} \label{thm4.2}
If $S^1\times M_K$ admits a symplectic structure, then $K$ is always
a fibered knot.
\end{theorem}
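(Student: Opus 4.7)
The plan is to apply Theorem \ref{thm4.1} to $S^1\times M_K$ to obtain a fibration $\pi:M_K\to S^1$, and then to arrange, after an ambient isotopy supported in the surgery solid torus, that $\pi$ restricts to a fibration of the knot exterior $E(K):=S^3\setminus\mathrm{int}\,N(K)$ with Seifert surfaces as fibers; this is precisely the definition of $K$ being fibered.

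First I would write $M_K=E(K)\cup V$, where $V\cong D^2\times S^1$ is the surgery solid torus glued so that its meridian $\partial D^2\times\{\mathrm{pt}\}$ is the Seifert longitude $l$ of $K$ and its core $\mu=\{0\}\times S^1$ generates $H_1(M_K;\mathbb{Z})\cong\mathbb{Z}$. Replacing $\pi$ by its connected-fiber factorization if necessary, I may assume $\pi$ has connected fibers, so that $\pi_{\ast}:H_1(M_K;\mathbb{Z})\to H_1(S^1;\mathbb{Z})$ is surjective and hence $[\pi]$ is a generator of $H^1(M_K;\mathbb{Z})\cong\mathbb{Z}$. Since the inclusion $V\hookrightarrow M_K$ induces an isomorphism on $H_1$, the class $[\pi|_V]$ is also a generator of $H^1(V;\mathbb{Z})\cong\mathbb{Z}$. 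Next I would note that the proper submersion $\pi|_V:V\to S^1$ is a smooth fiber bundle by Ehresmann's theorem; its connected fiber $F_V$ satisfies $\pi_1(V)\cong\pi_1(F_V)\rtimes\mathbb{Z}$, and since $\pi_1(V)\cong\mathbb{Z}$ this forces $\pi_1(F_V)=1$, i.e., $F_V\cong D^2$. Appealing to Smale's theorem (contractibility of $\mathrm{Diff}(D^2,\partial D^2)$) and to the compatibility of the resulting bundle trivialization on $\partial V$ with the $0$-surgery framing (fiber direction $=l$, base direction $=\mu$), I would then conclude that an ambient isotopy of $M_K$ supported in $\mathrm{int}\,V$ converts $\pi|_V$ into the standard projection $D^2\times S^1\to S^1$ onto the second factor.

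After this isotopy, each fiber $F=\pi^{-1}(t_0)$ will meet $V$ in the single meridian disk $D^2\times\{t_0\}$, so $\Sigma:=F\cap E(K)$ will be a compact oriented surface with $\partial\Sigma=\partial D^2\times\{t_0\}=l$, that is, a Seifert surface for $K$. The proper submersion $\pi|_{E(K)}:E(K)\to S^1$ is then again a smooth fiber bundle by Ehresmann's theorem, with fiber $\Sigma$, establishing that $K$ is fibered. The main obstacle will be the intermediate step: arranging that $\pi|_V$ coincides with the standard projection after an ambient isotopy of $M_K$ supported in $V$. This requires matching the Ehresmann bundle trivialization with the framing imposed by the $0$-surgery on $\partial V$ (both on the level of homology and up to isotopy of $T^2$), and invoking Smale's contractibility to remove any remaining ambiguity rel boundary; once this is verified, the remainder of the argument is essentially formal.
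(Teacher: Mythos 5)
Your overall route is the same as the paper's: apply Theorem \ref{thm4.1} to fiber $M_K$ over $S^1$, then argue that the fibration restricts to a fibration of the knot exterior with Seifert surfaces as fibers. The genuine gap is your invocation of Ehresmann's theorem for $\pi|_V:V\to S^1$. For a compact manifold \emph{with boundary}, a proper submersion is a fiber bundle only when its restriction to the boundary is also a submersion; here that means the fibers of $\pi$ must be everywhere transverse to the torus $\partial V=\partial N(K)$, and nothing in your argument guarantees this. (Compare the height function $D^2\to[-1,1]$: a proper submersion on all of $D^2$, including the boundary circle, that is not a fiber bundle.) Without transversality along $\partial V$ you cannot conclude that $\pi|_V$ is a $D^2$-bundle, so the subsequent steps (the $\pi_1$ computation of the fiber, Smale's theorem, the framing match, and Ehresmann applied to $\pi|_{E(K)}$) never get off the ground.

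Moreover, this is not a transversality that general position will buy you. Arranging it by an ambient isotopy amounts to isotoping the core circle $c$ of $V$ to a curve everywhere transverse to the fibers of $\pi$; but a simple closed curve representing the generator of $H_1(M_K)\cong\mathbb{Z}$ need not be isotopic to such a transverse curve --- for instance, $c$ could carry a local knot inside a ball, in which case its exterior has a different fundamental group from the exterior of any curve meeting each fiber once, so no ambient isotopy can achieve transversality. Normalizing the fibration of $M_K$ relative to the surgery torus is exactly the hard content of the implication ``$M_K$ fibers over $S^1$ implies $K$ is fibered,'' which is a theorem of Gabai proved with sutured manifold theory, not with Ehresmann and Smale. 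For what it is worth, the paper's own proof of Theorem \ref{thm4.2} stalls at the very same point: it asserts without justification that the foliation $\ker j^{\ast}(\iota_X\omega)$ on $S^3\setminus N(K)$ is transverse to the boundary torus. So you have correctly sensed where the difficulty lives (your closing caveat about the ``intermediate step''), but your proposal misplaces it: the problem is not matching trivializations after the bundle structure is obtained, it is obtaining the bundle structure on $V$ at all.
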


\begin{proof}
Suppose that $S^1\times M_K$ admits a symplectic structure. Then it
follows from Theorem \ref{thm4.1} that the 3-manifold $M_K$ is a
fibration of $S^1$. Moreover, by the construction in the proof of
Theorem \ref{thm4.1}, we have a closed 1-form $\iota_X\omega$ whose
class is integral and is pointwise non-zero. Now let $j:
S^3-N(K)\to M_K$ be the natural inclusion, where $N(K)$ is a tubular
neighborhood of $K$. Now observe that by the pullback we have a closed 1-form
$j^\ast(\iota_X\omega)$ on $S^3-N(K)$ whose class is still integral
and is non-zero pointwise on $S^3-N(K)$. Then $j^\ast(\iota_X\omega)$ defines a measured foliation ${\mathcal F}$ of $S^3-N(K)$ transverse to the boundary $\partial (S^3-N(K))$ with $T{\mathcal F}=\ker j^\ast(\iota_X\omega)$. Since $j^\ast(\iota_X\omega)$ is integral, we can also write $j^\ast(\iota_X\omega)=d\pi$ for a fibration $\pi: S^3-N(K)\to S^1$ whose fibers are the leaves of ${\mathcal F}$ (e.g., see p.2 of \cite{Mc-Ta} for more details). Thus the knot $K$ is indeed fibered. This completes the proof.
\end{proof}



\end{document}